\documentclass[a4paper,11pt,reqno]{amsart}
\usepackage{amsfonts,amsmath,amsthm,amssymb,mathrsfs}
\usepackage{color}
\usepackage{amsmath,amssymb,amsfonts,bm}
\usepackage{graphicx}
\usepackage{CJKutf8}
\usepackage{newunicodechar}
\usepackage{xcolor}
\numberwithin{equation}{section}
\usepackage[pdfstartview=FitH,colorlinks=true]{hyperref}
\hypersetup{urlcolor=red, linkcolor=blue,citecolor=red, CJKbookmarks=true}
\allowdisplaybreaks

\newtheorem{theorem}{Theorem}[section]

\newtheorem{lemma}[theorem]{Lemma}

\theoremstyle{definition}
\newtheorem{definition}[theorem]{Definition}
\newtheorem{remark}{Remark}[section]

\newcommand{\abs}[1]{\left\vert#1\right\vert}

\newcommand{\norm}[1]{\left\Vert#1\right\Vert}

\newcommand{\inner}[1]{\left(#1\right)}

\makeatletter

\newcommand{\Rmnum}[1]{\expandafter\@slowromancap\romannumeral #1@}
\makeatother

\DeclareMathOperator\divg{div}

\title[The Boussineq equations]
{Analytical smoothing effect of solution\\
for the Boussinesq equations}
\author{F. Cheng \& C.-J. Xu}
\date{}

\address{\noindent \textsc{Feng Cheng, School of Mathematics and Statistics, Wuhan university 430072, Wuhan, P.R. China}}
\email{chengfengwhu@whu.edu.cn}

\address{\noindent \textsc{Chao-Jiang Xu, School of Mathematics and Statistics, Wuhan university 430072, Wuhan, P.R. China and 
 Universit\'e de Rouen, CNRS UMR 6085, Laboratoire de Math\'ematiques, 76801 Saint-Etienne du Rouvray, France}}
\email{Chao-Jiang.Xu@univ-rouen.fr}
\begin{document}

\keywords{Analyticity, smoothing effect of solutions, Boussinesq equation}
\subjclass[2010]{35Q35, 35M30,76B03}

\begin{abstract}
In this paper, we study the analytical smoothing effect of Cauchy problem for the incompressible Boussinesq equations. Precisely, we use the Fourier method to prove that the Sobolev $H^1$-solution  to the incompressible Boussinesq equations in periodic domain is analytic for any positive time. So the incompressible Boussinesq equation admet exactly same smoothing effect properties of incompressible Navier-Stokes equations.
\end{abstract}

\maketitle

\section{Introduction}

In this paper, we consider the following incompressible Boussinesq  equations on the torus $\mathbb{T}^N=[0,2\pi]^N$ with $N=2$ or $3$,
\begin{equation}\label{1.1}
\left\{
\begin{aligned}
 &\frac{\partial u}{\partial t}-\nu\Delta u+u\cdot\nabla u+\nabla p=\theta e_N, \\
 &\frac{\partial \theta}{\partial t}-\kappa\Delta \theta+u\cdot\nabla \theta=0,\\
 &\nabla\cdot u=0,\\
 &u(0,x)=u_0(x),\quad \theta(0,x)=\theta_0(x),
\end{aligned}
\right.
\end{equation}
where $u(t,x)=(u_1,\ldots,u_N)(t,x), (t,x)\in \mathbb{R}^+\times\mathbb{T}^N$, is the velocity vector field, $p=p(t,x)$ is the scalar pressure, $\theta=\theta(t,x)$ is the scalar temperature in the content of thermal convection and the density in the modeling of geophysical fluids, $\nu>0$ is the viscosity, and $\kappa>0$ is the thermal diffusivity, and $e_N=(0,\ldots, 1)$ is the unit vector in the $x_N$-direction. In addition to \eqref{1.1}, we assume that $u,\theta,p$ are periodic for the spatial variable and the average value of $u,\theta,p$ on ${\mathbb{T}^N}$ vanishes:
\begin{equation}\label{1.2}
 \int_{\mathbb{T}^N} u(t,x)dx = 0,\,\,\, \int_{\mathbb{T}^N} \theta(t,x)dx=\int_{\mathbb{T}^N} p(t,x)dx=0,\quad \forall\, t\geq0.
\end{equation}
In physics, the Boussinesq system \eqref{1.1} is commonly used to model large scale atmospheric and oceanic flows, for example, tornadoes, cyclones, and hurricanes. It describes the dynamics of fluid influenced by gravitational force, which plays an very important role in the study of Raleigh-Bernard convection, see \cite{G,MB,MB1,P}.

In mathematics, the Boussinesq system is one of the most commonly used simplifed models to understand some key features of the 3-D incompressible Navier Stokes equations and Euler equations. Actually, if we set $\theta\equiv 0$, the Boussinesq system \eqref{1.1} reduces to the incompressible Navier Stokes equations. 

The well-posedness of the Boussinesq system \eqref{1.1} has been studied extensively in recent years. The global well-posedness of weak solutions, or strong solutions in the case of small data for the  Boussinesq equations has been considered by many authors. See, e.g. \cite{AH,BS,Chae3,DP1,DP2,DG,ST}. 
The global existence and uniqueness of smooth solutions to the 2D Boussinesq system  with partial viscosity has also been studied, see \cite{Chae1,Chae2,W,HL}. However, similar to three-dimensional Navier Stokes equation, the global existence or finite time blow-up of smooth solutions for the 3-D incompressible Boussinesq equations is still an open problem.

In this paper we study the analytical regularity of the strong solution to the incompressible Boussinesq equation \eqref{1.1}, which is similar to the analytical regularity of Navier Stokes equation. Since Foias and Temam \cite{FT} studied the Gevrey class regularity of Navier Stokes equations in 1989, there are many works on the Gevrey class regularity of solutions for many kinds of equations, see \cite{B,PV}. In two dimensions, the authors of \cite{BG} studied Gevrey class regularity for the two-dimensional Newton-Boussinesq equations. They considered the vorticity instead of the velocity to eliminate the pressure by transforming the velocity equation into the vorticity equation. In this paper, we use the Leray projection operator to deal with the pressure and the method can be applied to higher dimensions, which improves the previous work. 

The paper is organized as follows. In Section \ref{Section 2}, we will give some notations and state our main results. In Section \ref{Section 3}, we first recall some known results and then give some lemmas which are needed to prove the Theorem \ref{Theorem 2.1}. In Section \ref{Section 4}, we give the proof of the main Theorem \ref{Theorem 2.1}. 

\section{Notations and Main Theorems}\label{Section 2}

In this section, we will give some notations and function spaces which will be used throughout the following arguments. Throughout the paper, $C$ denotes a generic constant which may vary from line to line.

Denote by $C_p^\infty({\mathbb{T}^N})^N$ the $\mathbb{C}^N$-valued vector functions space of smooth functions satisfying periodic boundary condition, i.e., for $\phi\in C^\infty(\mathbb{R}^N)$,
$$
 \phi(x_1, \cdots, x_N)=\phi(x_1+2n\pi,\ldots,x_N+2n\pi),\ \forall\ n\in\mathbb{Z}\, .
$$
Denote
$$
 \mathcal{V}=\bigg\{\varphi\in C_p^\infty(\mathbb{T}^N)^N;\,\, \nabla\cdot \varphi=0 \bigg\}.
$$
We denote $L^2(\mathbb{T}^N)^N$ the $\mathbb{C}^N$-valued vector functions space of each vector component in $L^2(\mathbb{T}^N)$, and it is a Hilbert space for the inner product,
$$
 \inner{f,\,\, g}_{L^2(\mathbb{T}^N)}=\sum_{k=1}^N\int_{\mathbb{T}^N}f_k(x)\, \bar{g}_k(x)dx,
$$
where $f=(f_1,\ldots,f_N), g=(g_1,\ldots,g_N)\in L^2(\mathbb{T}^N)^N.$
Let $m>0$ be an integer, we denote $H^m(\mathbb{T}^N)^N$ be the $\mathbb{C}^N$-valued vector function space of each vector component in Sobolev space $H^m(\mathbb{T}^N)$, and the corresponding norm $\norm{\,\cdot\,}_m$ is
$$
 \norm{f}_m^2=\sum_{\abs{\alpha}\leq m}\sum_{k=1}^N\norm{\partial^\alpha f_k}_{L^2(\mathbb{T}^N)}^2\, .
$$

We denote by $L^2_\sigma(\mathbb{T}^N)$ the completion of $\mathcal{V}$ under the norm $\norm{\, \cdot\, }_{L^2(\mathbb{T}^N)}$ and similarly $H^m_\sigma(\mathbb{T}^N)$ is the closure of $\mathcal{V}$ under the norm $\norm{\, \cdot\,}_{m}$.
Using Fourier series expansion, we can identify $L^2_\sigma(\mathbb{T}^N)$ with 
\begin{align*}
 L^2_\sigma(\mathbb{T}^N)=\bigg\{u=\sum_{j\in\mathbb{Z}^N}\hat{u}_j e^{ij\cdot x},&\quad \hat{u}_j\cdot j=0,\quad \hat{u}_{-j}=\overline{\hat{u}_j},\ \hat{u}_0=0,\\
 &\quad \norm{u}^2_{L^2(\mathbb{T}^N)}=(2\pi)^N\sum_{j\in\mathbb{Z}^N}|\hat{u}_j|^2<\infty \bigg\},
\end{align*}
where $\hat{u}_0=0$ meets the condition (1.2). Let $r>0$ be a real number, we can then identify the Sobolev space $H^r_\sigma(\mathbb{T}^N)$ as
\begin{align*}
 H^r_\sigma(\mathbb{T}^N)=\bigg\{u=\sum_{j\in\mathbb{Z}^N}\hat{u}_j e^{ij\cdot x},&\quad \hat{u}_j\cdot j=0,\quad \hat{u}_{-j}=\overline{\hat{u}_j},\ \hat{u}_0=0,\\
 &\quad \norm{u}^2_r=(2\pi)^N\sum_{j\in\mathbb{Z}^N}\abs{j}^{2r}|\hat{u}_j|^2<\infty \bigg\},
\end{align*}
Recall that we say a smooth function $f$ belongs to Gevery class $G^s (\mathbb{T}^N)$  for some $s>0$, if there exists $C, \tau>0$ such that 
$$
\|\partial^\alpha f\|_{L^2(\mathbb{T}^N)} \leq C\frac{|\alpha|!^s}{\tau^{|\alpha|}},\quad \forall\, \alpha\in\mathbb{N}^N\, .
$$
The parameter $\tau$ is called the radius of Gevrey class $s$. In particular, when $s=1$, it  is the analytical functions.
Let us define the Zygmund operator $\Lambda=\sqrt{-\Delta}$ with
\begin{equation*}
 \Lambda u=\sum_{j\in\mathbb{Z}^N}\abs{j}\hat u_j e^{i j\cdot x},\quad u
 \in H^1_\sigma(\mathbb{T}^N).
\end{equation*}
The same definition of $ \Lambda \theta$ for the scalar function 
$\theta\in H^1(\mathbb{T}^N)$. 

We define now the function space $\mathcal{D}(\Lambda^r e^{\tau \Lambda^{1/s}})$ for $r, s, \tau>0$,  $u\in \mathcal{D}(\Lambda^r e^{\tau \Lambda^{1/s}})$, if $u\in L^2_\sigma(\mathbb{T}^N)$ and
$$
 \|\Lambda^r e^{\tau\Lambda^{1/s}}u\|_{L^2(\mathbb{T}^N)}^2=(2\pi)^N \sum_{j\in\mathbb{Z}^N}\abs{j}^{2r}e^{2\tau|j|^{1/s}}|\hat{u}_j|^2<\infty.
$$
For the scalar function $\theta$, we also write $\theta\in \mathcal{D}(\Lambda^r e^{\tau \Lambda^{1/s}})$, which means $\theta\in L^2((\mathbb{T}^N)$,   $\hat \theta_0=0$ and
$$
 \|\Lambda^r e^{\tau\Lambda^{1/s}} \theta\|_{L^2(\mathbb{T}^N)}^2=(2\pi)^N \sum_{m\in\mathbb{Z}^N}\abs{m}^{2r}e^{2\tau|m|^{1/s}}|\hat{\theta}_m|^2<\infty.
$$
It was proved in \cite{LO} that  $\mathcal{D}(\Lambda^r e^{\tau \Lambda^{1/s}})\subset 
G^s (\mathbb{T}^N)$ .

With these preparations, we can state our main results.

\begin{theorem}\label{Theorem 2.1}
If the initial data $(u_0,\theta_0)\in H^1_\sigma(\mathbb{T}^N)\times H^1(\mathbb{T}^N)$. Then the following results hold:\\
(1).For two dimensions case, $N=2$, the Boussinesq equations \eqref{1.1} admit an  unique global solution $(u,\, \theta)$ satisfying
$$
 u(t,x)\in C\big([0,\, +\infty[;\, \mathcal{D}(\Lambda e^{t\Lambda})\big),\quad \theta(t,x)\in C\big([0,\, +\infty[;\, \mathcal{D}(\Lambda e^{t\Lambda})\big)\, .
$$
(2).For three dimensions case, $N=3$, the Boussinesq equations \eqref{1.1} admit a unique local solution $(u,\theta)$ satisfying
$$
 u(t,x)\in C\big([0,T_1];\mathcal{D}(\Lambda e^{t\Lambda}) \big),\quad \theta(t,x)\in C\big([0,T_1];\mathcal{D}(\Lambda e^{t\Lambda})\big),
$$
where $T_1>0$ depends on the initial data $(u_0,\theta_0)$.
\end{theorem}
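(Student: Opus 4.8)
The plan is to eliminate the pressure by applying the Leray projection $\mathbb{P}$ onto divergence-free fields, which is a Fourier multiplier and therefore commutes with $\Lambda$ and with $e^{t\Lambda}$ and preserves the mean-zero conditions \eqref{1.2}. The system \eqref{1.1} then becomes
\[
 \dd_t u-\nu\Delta u+\mathbb{P}(u\cdot\nabla u)=\mathbb{P}(\theta e_N),\qquad \dd_t\theta-\kappa\Delta\theta+u\cdot\nabla\theta=0 .
\]
Since the global (for $N=2$) and local (for $N=3$) existence and uniqueness of the $H^1$-solution is known, the whole theorem reduces to an \emph{a priori} Gevrey estimate for this solution. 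To make the formal computations rigorous I would carry them out on the Galerkin approximations $u_n,\theta_n$ truncated to frequencies $\abs{j}\le n$: on these finite-dimensional systems $e^{t\Lambda}$ is bounded, so every weighted norm below is finite and differentiable in $t$, and the uniform bounds obtained pass to the limit by lower semicontinuity of the norm.

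The core is a differential inequality for the Gevrey energy $y(t)=\norm{\Lambda e^{t\Lambda}u(t)}_{L^2(\mathbb{T}^N)}^2+\norm{\Lambda e^{t\Lambda}\theta(t)}_{L^2(\mathbb{T}^N)}^2$. Differentiating and using $\tfrac{d}{dt}e^{t\Lambda}=\Lambda e^{t\Lambda}$ produces four kinds of terms. The viscous and diffusive parts give the good dissipation $-\nu\norm{\Lambda^2 e^{t\Lambda}u}_{L^2}^2-\kappa\norm{\Lambda^2 e^{t\Lambda}\theta}_{L^2}^2$. The terms in which the time derivative falls on the weight are of the form $\norm{\Lambda^{3/2}e^{t\Lambda}u}_{L^2}^2+\norm{\Lambda^{3/2}e^{t\Lambda}\theta}_{L^2}^2$, which by the interpolation $\norm{\Lambda^{3/2}w}\le\norm{\Lambda^2 w}^{1/2}\norm{\Lambda w}^{1/2}$ and Young's inequality are absorbed into half of the dissipation at the cost of a multiple of $y(t)$. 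The buoyancy coupling $\mathrm{Re}\,(\Lambda e^{t\Lambda}\mathbb{P}(\theta e_N),\Lambda e^{t\Lambda}u)_{L^2}$ is linear and bounded by $\tfrac12\,y(t)$. Thus everything except the two convective terms is already controlled.

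The decisive step is the estimate of the nonlinear terms $\mathrm{Re}\,(e^{t\Lambda}\mathbb{P}(u\cdot\nabla u),\Lambda^2 e^{t\Lambda}u)_{L^2}$ and $\mathrm{Re}\,(e^{t\Lambda}(u\cdot\nabla\theta),\Lambda^2 e^{t\Lambda}\theta)_{L^2}$, where note that the weight destroys the usual cancellation of the trilinear form, so a genuine product estimate is needed. Writing these on the Fourier side and using the subadditivity $e^{t\abs{j}}\le e^{t\abs{j-k}}e^{t\abs{k}}$ of the weight (the case $s=1$ of $\abs{j}\le\abs{j-k}+\abs{k}$), I would bound $e^{t\Lambda}(u\cdot\nabla u)$ frequency by frequency by the convolution built from $v=e^{t\Lambda}u$, reducing the Gevrey estimate to an ordinary product estimate for $v$ and $\Theta=e^{t\Lambda}\theta$. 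A Sobolev/Agmon product bound then gives, in three dimensions, a control of the form $c\,\norm{\Lambda v}^{3/2}\norm{\Lambda^2 v}^{3/2}$ (and its analogue for $\theta$), in which the top-order factor $\norm{\Lambda^2 v}$ enters with a power strictly less than $2$ and is absorbed into the dissipation by Young's inequality; in two dimensions the Ladyzhenskaya inequality $\norm{w}_{L^4}^2\le c\norm{w}_{L^2}\norm{\nabla w}_{L^2}$ lowers the powers and yields a milder nonlinearity. I expect this weighted product/commutator estimate to be the main obstacle: one must split the exponential weight cleanly and arrange the derivative counting so that exactly one factor $\Lambda^2$ appears, with total power below $2$, so it can be hidden in the dissipation.

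Collecting all contributions yields $y'(t)\le -c_0\big(\norm{\Lambda^2 e^{t\Lambda}u}_{L^2}^2+\norm{\Lambda^2 e^{t\Lambda}\theta}_{L^2}^2\big)+P(y(t))$ with $P$ polynomial (quadratic for $N=2$, cubic for $N=3$). For $N=3$ a comparison argument gives a finite time $T_1$, depending only on $y(0)=\norm{u_0}_1^2+\norm{\theta_0}_1^2$, up to which the bound persists, proving the local statement. For $N=2$ I would close the estimate globally by feeding in the known global-in-time $H^1$ (indeed $H^m$) bounds for the fully dissipative $2$D Boussinesq system, so that the coefficients multiplying $y$ are integrable on every $[0,T]$ and Gronwall's lemma keeps $y(t)$ finite for all $t$. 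Finally, continuity of $t\mapsto\Lambda e^{t\Lambda}u(t)$ into $L^2$ follows from the uniform energy bound together with the dissipation integral and the equation, and the inclusion $\mathcal{D}(\Lambda e^{t\Lambda})\subset G^1(\mathbb{T}^N)$ gives the claimed analyticity.
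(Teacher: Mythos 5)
Your strategy coincides with the paper's: Leray projection to eliminate the pressure, Galerkin approximation, and a Foias--Temam type Gevrey energy estimate with the growing weight $e^{t\Lambda}$, the nonlinear terms being reduced to ordinary product estimates by the triangle-inequality splitting $e^{t\abs{j}}\le e^{t\abs{j-k}}e^{t\abs{k}}$ (this is exactly the content of the paper's Lemma \ref{Lemma 4.1}). Your local-in-time argument, which settles the case $N=3$ and the case $N=2$ on $[0,T_1]$, is sound and matches the paper's proof. The genuine gap is in your \emph{global} closure for $N=2$. After the weight is split, \emph{every} factor of the trilinear term carries the exponential weight: writing $v=e^{t\Lambda}u$, $\Theta=e^{t\Lambda}\theta$, your two-dimensional estimate gives at best an inequality of the form
\begin{equation*}
 y'(t)+c_0\big(\norm{\Lambda^2 v}_{L^2}^2+\norm{\Lambda^2\Theta}_{L^2}^2\big)\le C\,\norm{v}_{L^2}^2\,\norm{\Lambda v}_{L^2}^2\, y(t)+\cdots,
\end{equation*}
so the coefficient multiplying $y$ is a \emph{weighted} quantity. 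The known global bounds for the fully dissipative 2D system (Theorem \ref{Theorem 3.4}, estimate \eqref{4.33}) control only the unweighted norms $\norm{u}_{L^2}$, $\norm{\Lambda u}_{L^2}$ and $\int_0^T\norm{\Lambda u}_{L^2}^2\,dt$; they give no information whatsoever on $\int_0^T\norm{e^{t\Lambda}u}_{L^2}^2\norm{\Lambda e^{t\Lambda}u}_{L^2}^2\,dt$, which is precisely part of what is being proved. Hence your Gronwall step is circular: the ``integrable coefficients'' are not integrable by any a priori information. Indeed, using Poincar\'e to estimate the coefficient by $y(t)^2$ turns the inequality back into $y'\le Cy^3$, and a bootstrap then only reproduces a lifespan of order $y(0)^{-2}$ --- the same local statement as in three dimensions, with no gain from the dimension.

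The paper closes the two-dimensional case by a different mechanism: the local Gevrey estimate yields an existence time $T_1$ depending only on the $H^1$ norm of the data at the starting time, and since by \eqref{4.33} the $H^1$ norm of the strong solution is bounded uniformly on every $[0,T]$, the local argument can be restarted at any $t_0$ and iterated in steps of uniform length to cover $[0,T]$. If you insist on a single Gronwall inequality, you would need the coefficient of $y$ to involve only unweighted norms, which requires a finer, commutator-type splitting of the weight (for instance $e^{t\Lambda}=1+(e^{t\Lambda}-1)$ together with $\abs{e^{t\abs{j}}-1}\le t\abs{j}e^{t\abs{j}}$), not the plain subadditivity you invoke. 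A final minor point: even if your two-dimensional nonlinearity is formally quadratic in $y$ rather than cubic, this buys nothing, because the prefactor $\norm{v}_{L^2}^2$ is again a weighted norm with no a priori bound.
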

\begin{remark}.

\noindent
{\bf 1.} Following the arguments of Foias and Temam \cite{FT}, one can improve the regularity of time $t$ for the solution $(u,\theta)$ by extending to the complex plane and showing that the solution is actually analytic in time variable. Since the computations are standard, we omit these details.

\noindent
{\bf 2.} The analytical smoothing effect of Theorem \ref{Theorem 2.1} imply the Gevery 
smoothing effect  in $\mathcal{D}(\Lambda^r e^{\tau \Lambda^{1/s}})\subset 
G^s (\mathbb{T}^N)$ for any $s\ge 1$. 
\end{remark}

\section{Premilinary results}\label{Section 3}
In this Section, we recall some known results on  the incompressible Boussinesq system \eqref{1.1}.
Let us first recall the definition of weak solutions and strong solutions for the Boussinesq system \eqref{1.1}.
\begin{definition}[weak solutions]\label{Def 3.1}
For any $T>0$, we call $(u,\theta)$ the weak solution of the Boussinesq system \eqref{1.1}, if
\begin{align*}
 u\in C\big([0,T]\,;L^2_\sigma (\mathbb{T}^N)\big)\cap L^2\big([0,T]\,; H^1_\sigma(\mathbb{T}^N) \big),\\
  \theta\in C\big([0,T]\,;L^2 (\mathbb{T}^N)\big)\cap L^2\big([0,T]\,;H^1(\mathbb{T}^N) \big),
\end{align*}
and $u$ satisfies the following weak formulation
\begin{equation*}
 \int_0^t \int_{\mathbb{T}^N} \partial_t\varphi\cdot u+\nabla\varphi:u\otimes u+\theta e_N\cdot \varphi+\nu \Delta\varphi\cdot u dxds=0,
\end{equation*}
for all $\varphi\in C^\infty_0\big([0,T]\times {\mathbb{T}^N}\,;\mathbb{R}^N\big)$ with $\nabla\cdot\varphi=0$,
and $\theta$ satisfies the following weak formulation
\begin{equation*}
 \int_0^t \int_{{\mathbb{T}^N}} \theta\partial_t \psi +\theta (u\cdot\nabla\psi)  +\kappa\theta \Delta\psi =0,
\end{equation*}
for all $\psi\in C^\infty_0\big([0,T]\times{\mathbb{T}^N}\,;\,\,\mathbb{R}\big)$.
\end{definition}
The pressure term $p$ in the velocity equation of \eqref{1.1} is the Lagrange multiplier, which is eliminated by taking the $L^2$-inner product with the divergence-free vector field. However, it can be determined by
\begin{equation*}
\Delta p=-\divg(u\cdot\nabla u)+\divg(\theta e_N),
\end{equation*}
with $p$ satisfying periodic boundary conditions and \eqref{1.2}.

Analogous to the incompressible Navier Stokes equation, the global existence of weak solutions to the Boussinesq equations is standard, see \cite{CD, DG, HK1, HK2} and references therein.

\begin{theorem}[weak solutions]\label{Theorem 3.2}
Let $(u_0, \theta_0)\in L^2_\sigma(\mathbb{T}^N)\times L^2(\mathbb{T}^N)$. There exists a weak solution $(u,\theta)$ of the Boussinesq equations \eqref{1.1} such that,
\begin{align*}
 u\in C\big([0,T]\,;L^2_\sigma (\mathbb{T}^N)\big)\cap L^2\big([0,T]\,; H^1_\sigma(\mathbb{T}^N) \big),\\
  \theta\in C\big([0,T]\,;L^2 (\mathbb{T}^N)\big)\cap L^2\big([0,T]\,;H^1(\mathbb{T}^N) \big)\,.
\end{align*}
Such solutions satisfies, for all $t\in [0,T]$, the energy inequalities
\begin{equation*}
 \norm{\theta(t)}_{L^2(\mathbb{T}^N)}^2+2\kappa\int_0^t\norm{\nabla \theta(s)}_{L^2(\mathbb{T}^N)}^2 ds \leq \norm{\theta_0}_{L^2(\mathbb{T}^N)}^2,
\end{equation*}
and
\begin{align*}
 \norm{u(t)}_{L^2(\mathbb{T}^N)}^2 &+2\nu\int_0^t \norm{\nabla u(s)}_{L^2(\mathbb{T}^N)}^2 ds
     \leq \norm{u_0}_{L^2(\mathbb{T}^N)}^2+Ct^2\norm{\theta_0}_{L^2(\mathbb{T}^N)}^2,
\end{align*}
for all $T>0$, and some constant $C>0$.
\end{theorem}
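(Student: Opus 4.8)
The plan is to construct the weak solution by the Galerkin method, following the classical Leray--Hopf scheme adapted to the coupled velocity--temperature system. First I would fix the Fourier basis $\{e^{ij\cdot x}\}$ (with the constraint $\hat u_j\cdot j=0$ for the velocity component) and let $P_n$ denote the projection onto the modes with $\abs{j}\le n$. Seeking approximate solutions $(u_n,\theta_n)$ in these finite-dimensional subspaces, and applying $P_n$ together with the Leray projection to eliminate the pressure, reduces \eqref{1.1} to a finite system of quadratic ODEs for the Fourier coefficients. Since the nonlinearity is polynomial and hence locally Lipschitz, the Cauchy--Lipschitz theorem yields a unique local-in-time solution; the uniform a priori bounds established below then prevent blow-up, so the approximate solutions are global.

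Next I would derive the energy estimates, uniformly in $n$. Taking the $L^2$ inner product of the $\theta_n$-equation with $\theta_n$ and using the incompressibility cancellation $\inner{u_n\cdot\nabla\theta_n,\theta_n}_{L^2(\mathbb{T}^N)}=\frac12\int u_n\cdot\nabla(\theta_n^2)\,dx=0$ gives $\frac{d}{dt}\norm{\theta_n}_{L^2(\mathbb{T}^N)}^2+2\kappa\norm{\nabla\theta_n}_{L^2(\mathbb{T}^N)}^2=0$, which integrates to the stated bound for $\theta$. Pairing the $u_n$-equation with $u_n$, the convective term vanishes by $\inner{u_n\cdot\nabla u_n,u_n}_{L^2(\mathbb{T}^N)}=0$ and the pressure term disappears under the Leray projection, leaving $\frac{d}{dt}\norm{u_n}_{L^2(\mathbb{T}^N)}^2+2\nu\norm{\nabla u_n}_{L^2(\mathbb{T}^N)}^2=2\inner{\theta_n e_N,u_n}_{L^2(\mathbb{T}^N)}$. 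Controlling the buoyancy forcing by $2\norm{\theta_n}_{L^2(\mathbb{T}^N)}\norm{u_n}_{L^2(\mathbb{T}^N)}\le 2\norm{\theta_0}_{L^2(\mathbb{T}^N)}\norm{u_n}_{L^2(\mathbb{T}^N)}$ and integrating the resulting differential inequality (using the already established temperature bound together with Young's inequality) produces the velocity energy inequality, the polynomial-in-time term $Ct^2\norm{\theta_0}_{L^2(\mathbb{T}^N)}^2$ reflecting the accumulated effect of the temperature forcing.

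These bounds give $\{u_n\}$, $\{\theta_n\}$ bounded in $L^\infty(0,T;L^2)\cap L^2(0,T;H^1)$, hence weak-$*$ and weakly convergent subsequences. The crucial step is passing to the limit in the nonlinear terms $u_n\otimes u_n$ and $u_n\theta_n$, which requires strong convergence in $L^2(0,T;L^2)$. To obtain this I would estimate the time derivatives $\partial_t u_n$, $\partial_t\theta_n$ directly from the equations in a negative-order space (namely $L^2(0,T;H^{-1})$ when $N=2$, and $L^{4/3}(0,T;H^{-1})$ when $N=3$, where the embedding $H^1\hookrightarrow L^6$ controls $u_n\cdot\nabla u_n$), and then invoke the Aubin--Lions--Simon compactness lemma to extract a subsequence converging strongly in $L^2(0,T;L^2)$. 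This strong convergence lets one identify the limits of the quadratic terms and verify the weak formulation of Definition \ref{Def 3.1}. Finally, the energy inequalities for the limit follow from weak lower semicontinuity of the $L^2$ norms, and the time continuity $u\in C([0,T];L^2_\sigma(\mathbb{T}^N))$, $\theta\in C([0,T];L^2(\mathbb{T}^N))$ is recovered by the standard Lions--Magenes interpolation argument combined with the energy identity.

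I expect the main obstacle to be exactly this compactness step: securing the time-derivative bound for the quadratic convection term in three dimensions, where the weaker Sobolev embedding forces one to work in $L^{4/3}$ rather than $L^2$ in time, and running the Aubin--Lions extraction correctly. Everything else — the two incompressibility cancellations, the mild (linear) buoyancy coupling, and the semicontinuity passage to the energy inequalities — is routine once strong $L^2$ convergence is in hand.
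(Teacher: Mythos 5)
The paper itself offers no proof of this theorem --- it is quoted as standard, with the argument delegated to \cite{CD,DG,HK1,HK2} --- so your Galerkin/Leray--Hopf construction is exactly the route being invoked, and its skeleton (finite-dimensional projection, the two incompressibility cancellations, uniform energy bounds, Aubin--Lions compactness, lower semicontinuity) is sound. However, one step fails as claimed: Young's inequality does not produce the velocity energy inequality in the stated form. Integrating $\frac{d}{dt}\|u_n\|_{L^2}^2+2\nu\|\nabla u_n\|_{L^2}^2\le 2\|\theta_0\|_{L^2}\|u_n\|_{L^2}$ gives $\|u_n(t)\|_{L^2}^2+2\nu\int_0^t\|\nabla u_n\|_{L^2}^2\,ds\le\big(\|u_0\|_{L^2}+t\|\theta_0\|_{L^2}\big)^2$, and applying Young to the cross term yields $2\|u_0\|_{L^2}^2+2t^2\|\theta_0\|_{L^2}^2$, not $\|u_0\|_{L^2}^2+Ct^2\|\theta_0\|_{L^2}^2$. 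This loss is not an artifact of your estimate: take $N=2$, $u_0=(0,a\sin x_1)$, $\theta_0=b\sin x_1$ with $a,b>0$; then $u=(0,g(t)\sin x_1)$, $\theta=be^{-\kappa t}\sin x_1$ with $g'+\nu g=be^{-\kappa t}$, $g(0)=a$, solves \eqref{1.1} exactly (both nonlinear terms vanish identically), it is the unique weak solution for this data, and its energy satisfies $\frac{d}{dt}\big(\|u(t)\|_{L^2}^2+2\nu\int_0^t\|\nabla u\|_{L^2}^2\,ds\big)\big|_{t=0}=2(\theta_0 e_2,u_0)_{L^2}=4\pi^2 ab>0$, so for small $t$ the left-hand side exceeds $\|u_0\|_{L^2}^2+Ct^2\|\theta_0\|_{L^2}^2$ no matter how large $C$ is. The inequality is provable only with the cross term (equivalently, a constant factor in front of $\|u_0\|_{L^2}^2$) retained; you should state the bound your computation actually gives rather than assert the printed one follows.

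The second gap is your final continuity claim. You correctly observe that in three dimensions the convection term only gives $\partial_t u_n$ bounded in $L^{4/3}(0,T;H^{-1})$, but you then assert $u\in C([0,T];L^2_\sigma)$ via ``the standard Lions--Magenes interpolation argument combined with the energy identity.'' That argument requires exactly what fails in 3D: Lions--Magenes needs both $u\in L^2(0,T;H^1)$ and $\partial_t u\in L^2(0,T;H^{-1})$, and the energy \emph{identity} is not available for 3D weak solutions --- only the inequality survives the weak limit. So your construction yields strong $L^2$-continuity in time only for $N=2$; for $N=3$ it gives weak continuity $C_w([0,T];L^2)$, and strong continuity of 3D weak solutions is an open issue tied to the energy equality. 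Both difficulties are arguably defects of the statement as printed, which the paper leaves unproved, but your proposal claims to derive it, and at these two points the derivation does not go through.
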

\begin{remark}
For two dimensions case, $N=2$, the weak solution obtained in Theorem \ref{Theorem 3.2} is unique. Whether the uniqueness of weak solution for three dimensions case $N=3$ is still unknown.
\end{remark}
In the following, we study a class of more regular solutions to the Boussinesq equations, which is called strong solutions analogous to the Navier-Stokes equations and introduced by Temam in \cite{RT}.
\begin{definition}[strong solutions]
Let $(u_0,\theta_0)\in H^1_\sigma(\mathbb{T}^N)\times H^1(\mathbb{T}^N)$, 
Then $(u,\theta)$ is a strong solution of the Boussinesq equations \eqref{1.1} if
\begin{align*}
 u\in C\big([0,T]\,; H^1_\sigma (\mathbb{T}^N)\big)\cap L^2\big([0,T]\,; H^2_\sigma(\mathbb{T}^N) \big),\\
  \theta\in C\big([0,T]\,; H^1 (\mathbb{T}^N)\big)\cap L^2\big([0,T]\,; H^2(\mathbb{T}^N) \big)\, ,
\end{align*}
and $(u, \theta)$ satisfies (3.1) and (3.2).
\end{definition}

The existence and uniqueness of strong solutions for the incompressible Boussinesq equations \eqref{1.1} is analougous to the results of Navier Stokes equations, which we state as follows.
\begin{theorem}\label{Theorem 3.4}
Let $(u_0,\theta_0)\in H^1_\sigma(\mathbb{T}^N)\times H^1(\mathbb{T}^N)$. Then the following results hold. \\
(1). For two dimensions case $N=2$,  for any $T>0$, the Boussinesq equations \eqref{1.1} admit an unique strong solution $(u, \theta)$ on $[0,T]$ satisfying
\begin{align*}
 u\in C\big([0,T]\,; H^1_\sigma (\mathbb{T}^2)\big)\cap L^2\big([0,T]\,; H^2_\sigma(\mathbb{T}^2) \big),\\
  \theta\in C\big([0,T]\,; H^1 (\mathbb{T}^2)\big)\cap L^2\big([0,T]\,; H^2(\mathbb{T}^2) \big)\, .
\end{align*}
(2). For three dimensions $N=3$, there exists $T_\ast>0$ such that the Boussinesq equations \eqref{1.1} admit a unique strong solution $(u, \theta)$ on $[0,T_\ast]$ satisfying
\begin{align*}
 u\in C\big([0,T]\,; H^1_\sigma (\mathbb{T}^3)\big)\cap L^2\big([0,T]\,; H^2_\sigma(\mathbb{T}^3) \big),\\
  \theta\in C\big([0,T]\,; H^1 (\mathbb{T}^3)\big)\cap L^2\big([0,T]\,; H^2(\mathbb{T}^3) \big)\, ,
\end{align*}
where $T_\ast>0$ depends on the initial data $(u_0, \theta_0)$.
\end{theorem}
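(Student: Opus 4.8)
The plan is to construct the strong solution by the Galerkin method and to close the a priori bounds at the $H^1$ level, exactly as for the incompressible Navier--Stokes system; the buoyancy term $\theta e_N$ enters the velocity equation only as a lower-order forcing and is absorbed together with the parabolic estimate for $\theta$. First I would apply the Leray projection $\mathbb{P}$ onto $L^2_\sigma(\mathbb{T}^N)$ to remove the pressure, rewriting \eqref{1.1} as $\dd_t u+\nu\Lambda^2u+\mathbb{P}(u\cdot\nabla u)=\mathbb{P}(\theta e_N)$ and $\dd_t\theta+\kappa\Lambda^2\theta+u\cdot\nabla\theta=0$. Projecting onto the finite-dimensional space $P_n$ spanned by the Fourier modes $\set{e^{ij\cdot x}:\abs{j}\le n}$ turns this into a quadratic system of ordinary differential equations for the coefficients of $(u_n,\theta_n)$, which has a unique local-in-time solution by the Cauchy--Lipschitz theorem; the a priori bounds below promote it to the whole relevant time interval.

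Next I would derive the $H^1$ a priori estimate, the core of the argument. The $L^2$ bounds are already supplied by Theorem \ref{Theorem 3.2}. Testing the velocity equation against $-\Delta u$ and the temperature equation against $-\Delta\theta$, and using that $\psca{\nabla p,\Delta u}=0$ for divergence-free periodic fields, one obtains
$$\frac12\frac{d}{dt}\pare{\norm{\nabla u}_{L^2}^2+\norm{\nabla\theta}_{L^2}^2}+\nu\norm{\Delta u}_{L^2}^2+\kappa\norm{\Delta\theta}_{L^2}^2=\psca{u\cdot\nabla u,\Delta u}+\psca{u\cdot\nabla\theta,\Delta\theta}-\psca{\theta e_N,\Delta u}.$$
The buoyancy term is controlled by Cauchy--Schwarz and Young, $\abs{\psca{\theta e_N,\Delta u}}\le\frac{\nu}{4}\norm{\Delta u}_{L^2}^2+C\norm{\theta}_{L^2}^2$, so it is dominated by the dissipation plus the already-bounded $L^2$ mass of $\theta$. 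The transport terms are estimated by Gagliardo--Nirenberg interpolation and Young's inequality so as to absorb the top-order norms $\norm{\Delta u}_{L^2}^2,\norm{\Delta\theta}_{L^2}^2$ into the left-hand side.

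The dimension enters precisely here. For $N=2$ the interpolation yields $\abs{\psca{u\cdot\nabla u,\Delta u}}\le\frac{\nu}{4}\norm{\Delta u}_{L^2}^2+C\norm{u}_{L^2}^2\norm{\nabla u}_{L^2}^4$, and similarly for the $\theta$ term, so that with $E(t)=\norm{\nabla u}_{L^2}^2+\norm{\nabla\theta}_{L^2}^2$ one gets $E'(t)\le a(t)E(t)$ with $a(t)\le C(1+\norm{\nabla u}_{L^2}^2+\norm{\nabla\theta}_{L^2}^2)$; since $\int_0^T(\norm{\nabla u}_{L^2}^2+\norm{\nabla\theta}_{L^2}^2)\,dt<\infty$ by the energy inequalities, Gr\"onwall's lemma gives a bound on $[0,T]$ for every $T>0$, which is the global statement (1). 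For $N=3$ the supercritical interpolation only gives $E'(t)\le C(1+E(t)^3)$, whose solution is finite merely on a short interval $[0,T_\ast]$ with $T_\ast$ depending on $\norm{u_0}_1,\norm{\theta_0}_1$; this is statement (2).

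With these $n$-uniform bounds in $L^\infty([0,T];H^1)\cap L^2([0,T];H^2)$ (and the resulting bound on $\dd_t u_n,\dd_t\theta_n$ in $L^2([0,T];L^2)$), I would extract a weakly-$\ast$ convergent subsequence and use the Aubin--Lions--Simon compactness lemma to obtain strong convergence in $L^2([0,T];H^1)$, which is enough to pass to the limit in the quadratic terms $u_n\cdot\nabla u_n$ and $u_n\cdot\nabla\theta_n$; the limit $(u,\theta)$ is a strong solution, and the bound $\dd_t u\in L^2([0,T];L^2)$ together with $u\in L^2([0,T];H^2)$ gives the continuity $u\in C([0,T];H^1_\sigma)$ (and likewise for $\theta$). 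Uniqueness I would obtain by testing the equations for the difference $(w,\eta)=(u_1-u_2,\theta_1-\theta_2)$ against $(w,\eta)$ in $L^2$ and running Gr\"onwall, the coefficients being controlled by the already-established $H^1$/$H^2$ bounds of the two solutions. The main obstacle is the closure of the $H^1$ estimate: its success is dimension-dependent, being global in $2$D through the integrability of $\norm{\nabla u}_{L^2}^2$ and only local in $3$D because of the supercritical scaling of the convective term, which is exactly why (1) holds for all $T$ while (2) holds only up to $T_\ast$.
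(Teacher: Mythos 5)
Your proposal is correct and follows essentially the same route as the paper: Galerkin approximation, an $H^1$ energy estimate obtained by testing with $-\Delta u$ and $-\Delta\theta$ (the paper's $\Lambda^2 u$, $\Lambda^2\theta$), dimension-dependent Gagliardo--Nirenberg interpolation that closes globally in $2$D via Gr\"onwall and the time-integrability of $\norm{\nabla u}_{L^2}^2+\norm{\nabla\theta}_{L^2}^2$ from the weak-solution energy inequalities, but only locally in $3$D via the cubic differential inequality, followed by compactness and continuity in time. The only cosmetic differences are that you absorb the buoyancy term directly by Young's inequality rather than through $\pare{\Lambda(\theta e_N),\Lambda u}$, and you spell out the Aubin--Lions and uniqueness steps that the paper delegates to Temam's book.
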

\begin{proof}
For the self-contenant of paper, we give the {\it \`a priori} estimate of  the solutions, then the construction of approximate solutions follows from the standard Galerkin method which we refer readers to \cite{RT}.

Let us assume that $(u,\theta)$ is the smooth solution of the incompressible Boussinesq equations. We multiply the velocity equation of \eqref{1.1} by $\Lambda^2 u$ and integrate over ${\mathbb{T}^N}$. After integrating by parts, we have
\begin{align}\label{3.6}
 \frac{1}{2}\frac{d}{dt} \|\Lambda u(t,\cdot)\|_{L^2(\mathbb{T}^N)}^2 &+\nu \|\Lambda^2 u\|_{L^2(\mathbb{T}^N)}^2\notag \\
& = \big(\Lambda(\theta e_N) ,\Lambda u \big)_{L^2(\mathbb{T}^N)} - \big(u\cdot\nabla u ,\Lambda^2 u \big)_{L^2(\mathbb{T}^N)},
\end{align}
where $\big(\nabla p,\Lambda^2 u\big)_{L^2(\mathbb{T}^N)}$ vanishes because $u$ is divergence-free and $\Lambda^2=-\Delta$ does not violate this property on the torus.
Then we multiply the thermal equation of \eqref{1.1} by $\Lambda^2\theta$ and integrate over ${\mathbb{T}^N}$. Integrating by parts, we obtain
\begin{align}\label{3.7}
 \frac{1}{2}\frac{d}{dt} \|\Lambda \theta(t,\cdot)\|_{L^2(\mathbb{T}^N)}^2 +\kappa \|\Lambda^2 \theta\|_{L^2(\mathbb{T}^N)}^2 =-\big( u\cdot\nabla \theta,\Lambda^2\theta\big)_{L^2(\mathbb{T}^N)}.
\end{align}
We first recall the following Sobolev inequality (see \cite{RT},  now usually called the Gagliardo-Nirenberg inequality),
\begin{align}
\|f\|_{L^\infty(\mathbb{T}^2)} &\leq C\|f\|_{L^2(\mathbb{T}^2)}^{1/2}\|\Lambda^2 f\|_{L^2(\mathbb{T}^2)}^{1/2},\quad\forall f\in H^2(\mathbb{T}^2), \label{GN-1}\\
 \|f\|_{L^\infty(\mathbb{T}^3)} &\leq C\|\Lambda f\|_{L^2(\mathbb{T}^3)}^{1/2}\|\Lambda^2 f\|_{L^2(\mathbb{T}^3)}^{1/2},\quad\forall f\in H^2(\mathbb{T}^3).\label{GN-2}
\end{align}
Using the Cauchy-Schwartz inequality and the Sobolev inequality, we have
\begin{align}
 \big|\big(u\cdot\nabla u ,\Lambda^2 u \big)_{L^2(\mathbb{T}^2)} \big| 
 &\leq \|u\cdot\nabla u\|_{L^2(\mathbb{T}^2)}\|\Lambda^2 u\|_{L^2(\mathbb{T}^2)}\notag\\
 &\leq \|u\|_{L^\infty(\mathbb{T}^2)}\|\nabla u\|_{L^2(\mathbb{T}^2)}\|\Lambda^2 u\|_{L^2(\mathbb{T}^2)}\notag\\
 &\leq C \|u\|_{L^2(\mathbb{T}^2)}^{1/2}\|\Lambda u\|_{L^2(\mathbb{T}^2)} \|\Lambda^2 u\|_{L^2(\mathbb{T}^2)}^{3/2}, \label{3.8}
 \end{align}
 and
 \begin{align}
  \big|\big(u\cdot\nabla u ,\Lambda^2 u \big)_{L^2(\mathbb{T}^3)} \big|
  &\leq \|u\cdot\nabla u\|_{L^2(\mathbb{T}^3)}\|\Lambda^2 u\|_{L^2(\mathbb{T}^3)}\notag\\
 &\leq \|u\|_{L^\infty(\mathbb{T}^3)}\|\nabla u\|_{L^2(\mathbb{T}^3)}\|\Lambda^2 u\|_{L^2(\mathbb{T}^3)}\notag\\  
  &\leq C \|\Lambda u\|_{L^2(\mathbb{T}^3)}^{3/2}\|\Lambda^2 u\|_{L^2(\mathbb{T}^3)}^{3/2}\, . \label{3.9}
\end{align}
We have also
\begin{align}
 \big|\big(u\cdot\nabla \theta ,\Lambda^2\theta \big)_{L^2(\mathbb{T}^2)} \big| 
 &\leq \|u\cdot\nabla\theta\|_{L^2(\mathbb{T}^2)}\|\Lambda^2 \theta\|_{L^2(\mathbb{T}^2)}\notag\\
 &\leq C \|u\|_{L^2(\mathbb{T}^2)}^{1/2}\|\Lambda^2u\|_{L^2(\mathbb{T}^2)}^{1/2}\notag\\
 &\qquad\qquad\qquad\times \|\nabla \theta\|_{L^2(\mathbb{T}^2)}\|\Lambda^2\theta\|_{L^2(\mathbb{T}^2)}, \label{3.10}
  \end{align}
 and
 \begin{align}
  \big|\big(u\cdot\nabla \theta ,\Lambda^2\theta \big)_{L^2(\mathbb{T}^3)} \big| 
 &\leq C\|\Lambda u\|_{L^2(\mathbb{T}^3)}^{1/2}\|\Lambda^2 u\|_{L^2(\mathbb{T}^3)}^{1/2}\|\Lambda\theta\|_{L^2}\|\Lambda^2\theta\|_{L^2(\mathbb{T}^3)}. \label{3.11}
\end{align}

\noindent
{\bf Case of  $N=2$.}  we add \eqref{3.6} and \eqref{3.7} and apply the Young's inequality on \eqref{3.8} and \eqref{3.10},
\begin{align*}
& \frac{1}{2}\frac{d}{dt}\big(\|\Lambda u\|_{L^2(\mathbb{T}^2)}^2 +\|\Lambda\theta\|_{L^2(\mathbb{T}^2)}^2 \big) +\nu\|\Lambda^2 u \|_{L^2(\mathbb{T}^2)}^2+\kappa\|\Lambda^2\theta\|_{L^2(\mathbb{T}^2)}^2\\
  &\leq C\|u\|_{L^2(\mathbb{T}^2)}^{1/2}\|\Lambda u\|_{L^2(\mathbb{T}^2)}\|\Lambda^2u\|_{L^2(\mathbb{T}^2)}^{3/2}+\|\Lambda(\theta e_n)\|_{L^2(\mathbb{T}^2)}\|\Lambda u\|_{L^2(\mathbb{T}^2)}\\
 &\qquad +C\|u\|_{L^2(\mathbb{T}^2)}^{1/2}\|\Lambda^2u\|_{L^2(\mathbb{T}^2)}^{1/2}\|\Lambda \theta\|_{L^2(\mathbb{T}^2)}\|\Lambda^2\theta\|_{L^2(\mathbb{T}^2)}\\
  &\leq \frac{\nu}{2}\|\Lambda^2u\|_{L^2(\mathbb{T}^2)}^2 +\frac{\kappa}{4}\|\Lambda^2\theta\|_{L^2(\mathbb{T}^2)}^2 +C_\nu \|u\|_{L^2}^2 \|\Lambda u\|_{L^2(\mathbb{T}^2)}^4\\ 
  &\qquad+C_{\kappa,\nu} \|u\|^2_{L^2(\mathbb{T}^2)}\|\Lambda\theta\|_{L^2(\mathbb{T}^2)}^4+\|\Lambda \theta\|_{L^2(\mathbb{T}^2)}\|\Lambda u\|_{L^2(\mathbb{T}^2)}.
\end{align*}
Thus
\begin{align}\label{3.12}
& \frac{d}{dt}\big(\|\Lambda u\|_{L^2(\mathbb{T}^2)}^2 +\|\Lambda\theta\|_{L^2(\mathbb{T}^2)}^2 \big) +\nu\|\Lambda^2 u \|_{L^2(\mathbb{T}^2)}^2+\kappa\|\Lambda^2\theta\|_{L^2(\mathbb{T}^2)}^2
 \notag\\
  &\leq C_{\kappa,\nu} \|u\|_{L^2}^2( \|\Lambda u\|_{L^2(\mathbb{T}^2)}^4+\|\Lambda\theta\|_{L^2(\mathbb{T}^2)}^4)+\|\Lambda \theta\|_{L^2(\mathbb{T}^2)}\|\Lambda u\|_{L^2(\mathbb{T}^2)}.
\end{align}
Thus if we denote 
$$
Y(t)=1+\|\Lambda u(t,\cdot)\|_{L^2(\mathbb{T}^2)}^2+\|\Lambda \theta(t,\cdot)\|_{L^2(\mathbb{T}^2)}^2,
$$ 
we have
\begin{align*}
 \frac{d}{dt}Y(t) \leq C_{\nu,\kappa}^\prime (1+\|u\|_{L^2(\mathbb{T}^2)}^2)\big(1+\|\Lambda u\|_{L^2(\mathbb{T}^2)}^2+\|\Lambda\theta\|_{L^2(\mathbb{T}^2)}^2 \big)Y(t),
\end{align*}
where $C_{\nu,\kappa}^\prime$ is some large constant depending on $\nu,\kappa$.

Then  the Gronwall's inequality imply,
\begin{align*}
 Y(t) &\leq Y(0)\exp\bigg[C_{\nu,\kappa}^\prime \int_0^t (1+\|u(s,\cdot)\|_{L^2}^2)\big(1+\|\Lambda u(s)\|_{L^2}^2+\|\Lambda\theta(s)\|_{L^2}^2 \big)ds \bigg]\,. 
\end{align*}

Noting that the strong solution is always weak solution, using Theorem \ref{Theorem 3.2}, we have
\begin{align*}
 u\in C\big([0,T]\,;L^2_\sigma\big)\cap L^2\big([0,T]\,;H^1_\sigma\big),\quad \theta\in C\big([0,T]\,;L^2\big)\cap L^2\big([0,T]\,;H^1\big),
\end{align*}
and
\begin{align*}
 &\sup_{t\in [0,T]}\|u(t,\cdot)\|_{L^2}\leq \|u_0\|_{L^2},\\
& \int_0^t \|\nabla u(s,\cdot)\|_{L^2}^2ds \leq \frac{\|u_0\|_{L^2}^2+C\|\theta_0\|_{L^2}^2t^2}{2\nu},\\
 &\int_0^t \|\nabla \theta(s,\cdot)\|_{L^2}^2ds \leq \frac{\|\theta_0\|_{L^2}^2}{2\kappa}.
\end{align*}
Then ,
\begin{align*}
 Y(t)\leq Y(0)\exp\bigg[ C_{\nu,\kappa}^\prime (1+\|u_0\|_{L^2}^2)\bigg(t+\frac{\|u_0\|_{L^2}^2}{2\nu}+\frac{C\|\theta_0\|_{L^2}^2 t^2}{2\nu}+\frac{\|\theta_0\|_{L^2}^2}{2\kappa} \bigg) \bigg].
\end{align*}
Then it gives an upper bound of $Y(t)$, which also implies for any $T>0$,
\begin{align*}
 u\in L^\infty\big([0,T]\,;H^1_\sigma(\mathbb{T}^2)\big),\quad \theta\in L^\infty\big([0,T]\,;H^1(\mathbb{T}^2)\big).
\end{align*}
If we integrate \eqref{3.12} from 0 to $T$ for some fixed $T>0$, we obtain
\begin{align*}
 u\in L^2\big([0,T]\,;H^2_\sigma(\mathbb{T}^2)\big),\quad \theta\in L^2\big([0,T]\,;H^2(\mathbb{T}^2)\big).
\end{align*}
With these facts, one can obtain
\begin{equation*}
 \frac{\partial u}{\partial t}\in L^2\big([0,T]\,;L^2_\sigma (\mathbb{T}^2)\big),\quad \frac{\partial \theta}{\partial t}\in L^2\big([0,T]\,;L^2 (\mathbb{T}^2)\big)
\end{equation*}
Combining these estimates, one can also show the solution $(u,\theta)$ is actually satisfying
\begin{equation*}
  u \in C\big([0,T]\,; H^1_\sigma (\mathbb{T}^2)\big),\ \theta \in C\big([0,T]\,; H^1 (\mathbb{T}^2)\big),\ \ \forall\ T>0.
\end{equation*}
The continuity of $t$ in $H^1_\sigma(\mathbb{T}^2)$ for $u$ and in $H^1(\mathbb{T}^2)$ for $\theta$ follows the arguments of Temam [Chap 3. \cite{RT}].
This proves the global strong solution in two-dimensional space.

\noindent
{\bf Case of  $N=3$.} We apply the Young's inequality on \eqref{3.9} and \eqref{3.11} to bound the right hand side of \eqref{3.6} and \eqref{3.7},
\begin{align*}
 \frac{1}{2}\frac{d}{dt}&\big(\|\Lambda u\|_{L^2(\mathbb{T}^3)}^2 +\|\Lambda\theta\|_{L^2(\mathbb{T}^3)}^2 \big) +\nu\|\Lambda^2 u \|_{L^2(\mathbb{T}^3)}^2+\kappa\|\Lambda^2\theta\|_{L^2(\mathbb{T}^3)}^2\notag\\
  &\leq C\|\Lambda u\|_{L^2(\mathbb{T}^3)}^{3/2}\|\Lambda^2u\|_{L^2(\mathbb{T}^3)}^{3/2}+\|\Lambda(\theta e_n)\|_{L^2(\mathbb{T}^3)}\|\Lambda u\|_{L^2(\mathbb{T}^3)}\notag\\
  &+C\|\Lambda u\|_{L^2(\mathbb{T}^3)}^{1/2}\|\Lambda^2u\|_{L^2(\mathbb{T}^3)}^{1/2}\|\Lambda \theta\|_{L^2(\mathbb{T}^3)}\|\Lambda^2\theta\|_{L^2(\mathbb{T}^3)},
\end{align*}
thus
\begin{align}\label{3.17}
 \frac{1}{2}\frac{d}{dt}&\big(\|\Lambda u\|_{L^2(\mathbb{T}^3)}^2 +\|\Lambda\theta\|_{L^2(\mathbb{T}^3)}^2 \big) +\nu\|\Lambda^2 u \|_{L^2(\mathbb{T}^3)}^2+\kappa\|\Lambda^2\theta\|_{L^2(\mathbb{T}^3)}^2\notag\\
  &\leq \frac{\nu}{2}\|\Lambda^2u\|_{L^2(\mathbb{T}^3)}^2 +\frac{\kappa}{4}\|\Lambda^2\theta\|_{L^2(\mathbb{T}^3)}^2 +C_\nu  \|\Lambda u\|_{L^2(\mathbb{T}^3)}^6 \notag\\
  &\quad+C_{\kappa,\nu} \|\Lambda u\|^2_{L^2(\mathbb{T}^3)}\|\Lambda\theta\|_{L^2(\mathbb{T}^3)}^4+\|\Lambda \theta\|_{L^2(\mathbb{T}^3)}\|\Lambda u\|_{L^2(\mathbb{T}^3)}.
\end{align}
If we set $Z(t)=1+\|\Lambda u(t,\cdot)\|_{L^2(\mathbb{T}^3)}^2+\|\Lambda \theta(t,\cdot)\|_{L^2(\mathbb{T}^3)}^2$, we obtain
\begin{align*}
 \frac{d}{dt}Z(t)\leq C_{\nu,\kappa}^\prime Z(t)^3.
\end{align*}
This implies
\begin{align*}
 Z(t) \leq \frac{Z(0)}{\sqrt{1-2C_{\nu,\kappa}^\prime Z(0)^2t}}, \quad 0<t<\frac{1}{2C_{\nu,\kappa}^\prime Z(0)^2}.
\end{align*}
And thus
\begin{align}\label{3.18}
 &1+\|\Lambda u(t,\cdot)\|_{L^2(\mathbb{T}^3)}^2+\|\Lambda \theta(t,\cdot)\|_{L^2(\mathbb{T}^3)}^2\notag\\
& \qquad \leq 2(1+\|\Lambda u_0\|_{L^2(\mathbb{T}^3)}^2+\|\Lambda\theta_0\|_{L^2(\mathbb{T}^3)}^2),
\end{align}
if $t\leq T_1(\|\Lambda u_0\|_{L^2(\mathbb{T}^3)},\|\Lambda \theta_0\|_{L^2(\mathbb{T}^3)})=\frac{3}{8C_{\nu,\kappa}^\prime Z(0)^2}$. Then we obtain
\begin{align*}
 u\in L^\infty\big([0,T_1]\,;H^1_\sigma(\mathbb{T}^3)\big),\quad \theta\in L^\infty\big([0,T_1]\,;H^1(\mathbb{T}^3)\big)
\end{align*}
Integrating \eqref{3.17} from 0 to $T_1$, we can obtain
\begin{align*}
 u\in L^2\big([0,T_1]\,;H^2_\sigma(\mathbb{T}^3)\big),\quad \theta\in L^2\big([0,T_1]\,;H^2(\mathbb{T}^3)\big).
\end{align*}
With these facts, one can obtain
\begin{equation*}
 \frac{\partial u}{\partial t}\in L^2\big([0,T_1]\,;L^2_\sigma (\mathbb{T}^3)\big),\ \frac{\partial \theta}{\partial t}\in L^2\big([0,T_1]\,;L^2 (\mathbb{T}^3)\big).
\end{equation*}
Combining these estimates, one can then show the solution $(u,\theta)$ is actually satisfying
\begin{equation*}
 u \in C\big([0,T_1]\,; H^1_\sigma (\mathbb{T}^3)\big),\ \theta \in C\big([0,T_1]\,; H^1 (\mathbb{T}^3)\big).
\end{equation*}
This proved the local existence of strong solution for three-dimensional space.

The proof of the existence for both two dimensions and three dimensions can be made rigorous by considering the Galerkin approximation procedure, which is standard in the book \cite{RT}.
\end{proof}

\begin{remark}
The difference in the Sobolev inequality \eqref{GN-1} and \eqref{GN-2} caused by the spatial dimension $N$ makes huge influence on the lifespan of the solution, which is showed in the above proof.
\end{remark}

\section{Proof of Theorem \ref{Theorem 2.1}}\label{Section 4}

In this Section, we give the proof of Theorem \ref{Theorem 2.1}.
In order to prove the main Theorem \ref{Theorem 2.1}, we recall the following Lemmas in \cite{FT} concerning the estimates of nonlinear terms.
\begin{lemma}\label{Lemma 4.1}
Let $u,v,w$ be given in $\mathcal{D}(e^{\tau \Lambda^{1/s}}\Lambda^2)$ for $\tau>0, s>0$. Then the following estimates hold, for $N=2$ or 3,
\begin{align*}
 \big|\big(e^{\tau \Lambda^{1/s}}\Lambda (u\cdot\nabla v), e^{\tau \Lambda^{1/s}}\Lambda w\big)_{L^2(\mathbb{T}^N)}\big| \leq & C \|e^{\tau \Lambda^{1/s}}u\|_{1}^{1/2} \|e^{\tau \Lambda^{1/s}} u\|_{2}^{1/2}\notag\\
  &\quad\times \|e^{\tau \Lambda^{1/s}}v\|_{1} \|e^{\tau \Lambda^{1/s}} w\|_{2},
\end{align*}
and
\begin{align*}
 \|e^{\tau\Lambda^{1/s}}(u\cdot\nabla v) \|_{L^2(\mathbb{T}^N)} \leq C\|e^{\tau\Lambda^{1/s}}u\|_1^{1/2}\|e^{\tau\Lambda^{1/s}}u\|_2^{1/2}\|e^{\tau\Lambda^{1/s}}v\|_1,
\end{align*}
where $C>0$ is a constant.
\end{lemma}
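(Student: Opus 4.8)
The plan is to carry out both inequalities entirely on the Fourier side, where the Gevrey weight $e^{\tau\Lambda^{1/s}}$ acts diagonally by $\hat f_j\mapsto e^{\tau\abs{j}^{1/s}}\hat f_j$, and to reduce the weighted estimates to the classical unweighted Sobolev product estimates by transferring the exponential weight onto the individual factors. Expanding the nonlinearity as a convolution, the $j$-th Fourier coefficient of $u\cdot\nabla v$ is $i\sum_{k}(\hat u_{j-k}\cdot k)\,\hat v_k$, so by Parseval
\begin{equation*}
\inner{e^{\tau\Lambda^{1/s}}\Lambda(u\cdot\nabla v),\,e^{\tau\Lambda^{1/s}}\Lambda w}_{L^2} = i(2\pi)^N\sum_{j}\sum_{k}\abs{j}^2 e^{2\tau\abs{j}^{1/s}}(\hat u_{j-k}\cdot k)\,\hat v_k\cdot\overline{\hat w_j}.
\end{equation*}
Throughout I use that the zero Fourier mode vanishes, so the homogeneous norms $(\sum_j\abs{j}^{2m}\abs{\hat f_j}^2)^{1/2}$ arising from the Fourier computation are equivalent to the inhomogeneous norms $\norm{\cdot}_m$.

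The heart of the matter is to split the exponential weight across the three factors using the subadditivity inequality
\begin{equation*}
\abs{j}^{1/s}\leq\abs{j-k}^{1/s}+\abs{k}^{1/s},\qquad j=(j-k)+k,
\end{equation*}
which holds because $\abs{j}\leq\abs{j-k}+\abs{k}$ and $t\mapsto t^{1/s}$ is subadditive for $s\geq 1$ (in particular in the analytic case $s=1$ used in Theorem \ref{Theorem 2.1}). Keeping one factor $e^{\tau\abs{j}^{1/s}}$ with $\hat w_j$ and distributing the second via this inequality, and bounding the divergence structure crudely by $\abs{\hat u_{j-k}\cdot k}\leq\abs{\hat u_{j-k}}\,\abs{k}$, the modulus of the form is dominated by
\begin{equation*}
(2\pi)^N\sum_{j}\sum_{k}\abs{j}^2\,U_{j-k}\,\abs{k}\,V_k\,W_j,
\end{equation*}
where $U_j=e^{\tau\abs{j}^{1/s}}\abs{\hat u_j}$ and $V_j,W_j$ are defined analogously. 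These are the nonnegative Fourier coefficients of auxiliary functions $a,b,c$ with $\norm{a}_m=\norm{e^{\tau\Lambda^{1/s}}u}_m$, and likewise for $b,c$.

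It then remains to treat the surviving sum as a purely unweighted quantity. Cauchy–Schwarz in $j$ peels off $\norm{e^{\tau\Lambda^{1/s}}w}_2$ against the convolution $U\star(\abs{\cdot}V)$; by the convolution theorem the $\ell^2$-norm of the latter equals, up to a constant, $\norm{a\,\Lambda b}_{L^2}\leq\norm{a}_{L^\infty}\norm{\Lambda b}_{L^2}$, where $\norm{\Lambda b}_{L^2}=\norm{e^{\tau\Lambda^{1/s}}v}_1$. Finally the Gagliardo–Nirenberg inequalities \eqref{GN-1}–\eqref{GN-2} give $\norm{a}_{L^\infty}\leq C\norm{a}_1^{1/2}\norm{a}_2^{1/2}$ (using $\norm{a}_{L^2}\leq\norm{a}_1$ to absorb the $L^2$ factor in the two-dimensional case), which yields exactly the claimed bound. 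The second inequality follows the same scheme but stops one step earlier: the weight splitting gives directly $e^{\tau\abs{j}^{1/s}}\abs{\widehat{(u\cdot\nabla v)}_j}\leq(U\star(\abs{\cdot}V))_j$, whence $\norm{e^{\tau\Lambda^{1/s}}(u\cdot\nabla v)}_{L^2}\leq C\norm{a\,\Lambda b}_{L^2}$, and the same Gagliardo–Nirenberg step finishes the proof.

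I expect the weight-splitting step to be the only real obstacle: the entire reduction rests on the subadditivity $\abs{j}^{1/s}\leq\abs{j-k}^{1/s}+\abs{k}^{1/s}$, which is precisely what lets the Gevrey operator behave multiplicatively on the convolution and so decouples the weight from the classical Sobolev estimate; this is also what confines the argument to $s\geq 1$. By contrast, the remaining bookkeeping — allocating the two derivatives from $\Lambda^2$ and the single derivative from $\nabla v$ so that the unweighted estimate lands on the combination $\norm{\cdot}_1^{1/2}\norm{\cdot}_2^{1/2}\norm{\cdot}_1\norm{\cdot}_2$, and noting that discarding the cancellation in $\hat u_{j-k}\cdot k$ is harmless — is routine once the weight has been dispatched.
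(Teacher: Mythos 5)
Your proof is correct and is essentially the argument the paper itself defers to: the paper gives no proof of Lemma \ref{Lemma 4.1}, recalling it from Foias--Temam \cite{FT}, whose proof is exactly this Fourier-side computation — split $e^{2\tau|j|^{1/s}}$ via $|j|^{1/s}\le|j-k|^{1/s}+|k|^{1/s}$, keep one weight on $\hat w_j$, reduce the resulting convolution sum by Cauchy--Schwarz to $\|a\,\Lambda b\|_{L^2}\le\|a\|_{L^\infty}\|\Lambda b\|_{L^2}$ for the auxiliary functions with coefficients $e^{\tau|j|^{1/s}}|\hat u_j|$, $e^{\tau|j|^{1/s}}|\hat v_j|$, and finish with the Gagliardo--Nirenberg inequalities \eqref{GN-1}--\eqref{GN-2}. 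Your caveat about the weight-splitting step is also warranted: subadditivity of $t\mapsto t^{1/s}$ requires $s\ge 1$, and for $0<s<1$ the stated estimate in fact fails (take $u,v$ each with a single Fourier mode at the same frequency $j_0$, so that $e^{\tau|2j_0|^{1/s}}/e^{2\tau|j_0|^{1/s}}\to\infty$ as $|j_0|\to\infty$), so the hypothesis ``$\tau>0,s>0$'' in the lemma should read $s\ge1$ — a harmless slip here, since the paper only ever applies it with $s=1$.
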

\begin{remark}
In Lemma \ref{Lemma 4.1},  $u$ is $\mathbb{R}^N$-valued vector function while $v,w$ can be either $\mathbb{R}^N$-valued vector function or $\mathbb{R}$-valued scalar function.
\end{remark}
Then we are able to prove the Theorem \ref{Theorem 2.1} following the ideal of Foias and Temam \cite{FT}.
\begin{proof}[Proof of Theorem \ref{Theorem 2.1}]
For the sake of simplicity, we shall only consider the time variable in the real case and we remark that the results can be extended to the complex case following the same arguments of \cite{FT}. For this part, we set the radius of Gevrey class $\tau(t)=t$.

We recall that the usual strategy to approximate the Navier Stokes equation is to project the velocity field onto the divergence-free field. Here we also need to write the velocity equation of the Boussinesq system \eqref{1.1} into the following form
\begin{equation}\label{4.3}
 \frac{\partial u}{\partial t}-\nu \mathcal{P}\Delta u+\mathcal{P}(u\cdot\nabla u)=\mathcal{P}(\theta e_n),
\end{equation}
where $\mathcal{P}$ is the Helmholtz-Leray orthogonal projection, i.e., projection onto divergence-free vector fields. In this way, we eliminate the pressure term as Foias and Temam did in \cite{FT}.
Denote the operator $A=-\mathcal{P}\Delta$,  the eigenvectors $\{{\bf E}_j\}_{j=1}^\infty$ of $A$ constitutes the othonormal basis of $L^2_\sigma(\mathbb{T}^N)$. We denote the corresponding eigenvalues by $\{\lambda_j\}_{j=1}^\infty$ with $0<\lambda_1<\ldots\leq \lambda_j\leq \lambda_{j+1}\leq \ldots$, then $A{\bf E}_j=\lambda_j{\bf E}_j$. We note that $\mathcal{P}$ commutes with $-\Delta$ on the torus and $A=-\Delta$ when acting on the divergence-free vector field.

The thermal equation of the Boussinesq system \eqref{1.1} is the second order parabolic equation and the eigenvectors of $-\Delta$ also constitute the orthonormal basis of $L^2({\mathbb{T}^N})$. Let $\{{\bf e}_\alpha\}_{\alpha=1}^\infty$ be the orthonormal basis of $L^2({\mathbb{T}^N})$ and $\{\tau_\alpha\}_{\alpha=1}^\infty$ be the corresponding eigenvalues, which satisfies
$$
 0<\tau_1<\ldots\leq \tau_\alpha\leq \tau_{\alpha+1}\leq\ldots.
$$

We will then approximate the equation \eqref{4.3} and the thermal equation of \eqref{1.1} by the Galerkin method. For fixed integer $m\in\mathbb{N}$, let $\mathbb{P}_m$ be the projection from $L^2_\sigma(\mathbb{T}^N)$ onto the subspace $W_m$ spanned by $\{{\bf E}_j,\ 1\leq j\leq m, j\in\mathbb{Z}\}$ and $\mathbf{P}_m$ be the projection from $L^2(\mathbb{T}^N)$ onto the subspace $V_m$ of $L^2(\mathbb{T}^N)$ spanned by $\{{\bf e}_\alpha,\ 1\leq\alpha\leq m,\ \alpha\in\mathbb{Z} \}.$

We are looking for solution $(u_m,\theta_m)$ to the following approximate equations
\begin{align}
 &\frac{\partial u_m}{\partial t}-\nu\mathcal{P}\Delta u_m +\mathbb{P}_m\mathcal{P}(u_m\cdot\nabla u_m) =\mathbb{P}_m\mathcal{P}(\theta_m e_N),\label{4.4}\\
 &\frac{\partial \theta_m}{\partial t}-\kappa \Delta\theta_m +\mathbf{P}_m(u_m\cdot\nabla \theta_m)=0,\label{4.5}\\
 & u_m(0)=\mathbb{P}_m u_0,\ \theta(0)=\mathbf{P}_m \theta_0,\label{4.6}
\end{align}
where
$$u_m(t,x)=\sum_{1\leq j\leq m}\xi_{j,m}(t){\bf E}_j,\ \theta_\alpha(t,x)=\sum_{1\leq\alpha\leq m}\eta_{\alpha,m}(t){\bf e}_\alpha.
$$

The function $u_m, \theta_m$ are analytics for $x$ variables, in fact the sequence of ${\bf E}_j$'s and ${\bf e}_\alpha$'s is the linear combinations of the sequence of functions ${\bf W}_{k,\ell}$ and ${\bf w}_{n}$,
$$
 {\bf W}_{k,\ell}=a_{k,\ell}\big(e_\ell-\frac{k_\ell k}{|k|^2}\big)e^{i k\cdot x},\quad {\bf w}_{n}=\frac{1}{(2\pi)^N} e^{i n\cdot x},\quad \ell=1, \cdots, N,
$$
where $k=(k_1,\ldots,k_N),\, n=(n_1,\ldots,n_N)\in\mathbb{Z}^N\setminus \{0\}$,  $e_1,\ldots,e_N$ is the canonical basis of $\mathbb{R}^N$.  $a_{k,\ell}$ is the coefficient that make the sequence ${\bf W}_{k,\ell}$ to be orthonormal in $L^2_\sigma(\mathbb{T}^N)$. 

 After taking $L^2$-inner product of \eqref{4.4} with ${\bf E}_j$ and taking $L^2$-inner product of \eqref{4.5} with ${\bf e}_\alpha$, for $1\leq j\leq m$ and $1\leq\alpha\leq m$. The Cauchy problem \eqref{4.4}-\eqref{4.6} is equivalent to the following Cauchy problem for ordinary differential system
\begin{align}
 &\frac{d}{dt}\xi_{j,m}(t)+\nu \lambda_j \xi_{j,m}(t)+\sum_{1\leq k,\ell\leq m}A_{k,\ell,j}\xi_{k,m}(t)\xi_{\ell,m}(t)\notag\\
 &\qquad\qquad\qquad\qquad\qquad\qquad=\sum_{1\leq\gamma\leq m}C_{\gamma,j}\eta_{\gamma,m}(t),\label{4.7}\\
 &\frac{d}{dt}\eta_{\alpha,m}(t)+\kappa \tau_\alpha \eta_{\alpha,m}(t)+\sum_{1\leq j,\beta\leq m}B_{j,\beta,\alpha}\xi_{j,m}(t)\eta_{\beta,m}(t)=0,\label{4.8}\\
 & \xi_{j,m}(0)=(u_0,{\bf E}_j),\quad  \eta_{\alpha,m}(0)=(\theta_0, {\bf e}_\alpha),\label{4.9}
\end{align}
where 
\begin{align*}
A_{k,\ell,j}&=({\bf E}_k\cdot\nabla {\bf E}_\ell,\, {\bf E}_j)_{L^2(\mathbb{T}^N)}, \\
B_{j,\beta,\alpha}&=({\bf E}_j\cdot\nabla  {\bf e}_\beta,\, {\bf e}_\alpha)_{L^2(\mathbb{T}^N)},\\
C_{\gamma,j}&=({\bf e}_\gamma e_N, \, {\bf E}_j)_{L^2(\mathbb{T}^N)}.
\end{align*}
The standard theory of ordinary differential equations indicates that the ODE system \eqref{4.7}-\eqref{4.9} admet a unique local  solution $\big(\xi_{j,m}(t),\eta_{\alpha,m}(t)\big)_{1\leq j,\alpha\leq m}$ on some interval $[0,T_m]$. 

We prove now the solution of the ODE system \eqref{4.7}-\eqref{4.9},  $\big(\xi_{j,m}(t),\eta_{\alpha,m}(t)\big)$ can be extended to a global in time solution for any fixed $m$. To show this, we note that each ${\bf E}_j$ and ${\bf e}_\alpha$ are analytics. Then we can perform integration by parts to obtain
\begin{align}
A_{k,\ell,j} &=({\bf E}_k\cdot\nabla{\bf E}_\ell,\, {\bf E}_j)_{L^2(\mathbb{T}^N)}\notag\\
&=-({\bf E}_k\cdot\nabla {\bf E}_j,\, {\bf E}_\ell)_{L^2(\mathbb{T}^N)}\notag\\
&=-A_{k,j,\ell},\quad  \forall\ 1\leq k,\ell,j\leq m,\label{4.10}
\end{align}
where we used the fact $\nabla\cdot {\bf E}_k=0$.
Integrating by parts, one can also obtain
\begin{equation}\label{4.11}
B_{j,\beta,\alpha}=-B_{j,\alpha,\beta},\quad  \forall\ 1\leq j,\beta,\alpha\leq m.
\end{equation}
So if we multiply \eqref{4.7} by $\xi_{j,m}(t)$ and take sum over $\{1\leq j\leq m,j\in\mathbb{Z}\}$, we obtain
\begin{equation}\label{4.12}
 \frac{1}{2}\frac{d}{dt}\sum_{j=1}^m\xi_{j,m}(t)^2+\nu\sum_{j=1}^m \lambda_j \xi_{j,m}(t)^2=\sum_{j=1}^m\sum_{\gamma=1}^mC_{\gamma,j}\eta_{\gamma,m}(t)\xi_{j,m}(t),
\end{equation}
where we infer from \eqref{4.10} the following fact
$$
 \sum_{1\leq k,\ell,j\leq m}A_{k,\ell,j}\, \xi_{k,m}\, \xi_{\ell,m}\, \xi_{j,m}= -\sum_{1\leq k,\ell,j\leq m}A_{k,j,\ell}\, \xi_{k,m}\, \xi_{j,m}\, \xi_{\ell,m}=0.
$$
If we multiply \eqref{4.8} by $\eta_{\alpha,m}(t)$ and take sum over $\{1\leq\alpha\leq m,\alpha\in\mathbb{Z}\}$, we obtain
\begin{equation}\label{4.13}
 \frac{1}{2}\frac{d}{dt}\sum_{\alpha=1}^m\eta_{\alpha,m}(t)^2+\kappa\, \tau_\alpha \sum_{\alpha=1}^m \eta_{\alpha,m}^2=0,
\end{equation}
where we infer from \eqref{4.11} the fact
$$
 \sum_{1\leq j,\alpha,\beta\leq m}B_{j,\alpha,\beta}\, \xi_{j,m}\, \eta_{\alpha,m}\, \eta_{\beta,m}=-\sum_{1\leq j,\alpha,\beta\leq m}B_{j,\beta,\alpha}\, \xi_{j,m}\, \eta_{\beta,m}\, \eta_{\alpha,m}=0.
$$
The equation \eqref{4.13} implies that, for any $ m\in\mathbb{N},$
\begin{equation*}
 \|\theta_m(t,\cdot)\|_{L^2(\mathbb{T}^N)}^2=\sum_{\alpha=1}^m\eta_{\alpha,m}(t)^2\leq \sum_{\alpha=1}^m\eta_{\alpha,m}(0)^2  \leq \|\theta_0\|_{L^2(\mathbb{T}^N)}^2,\, \forall\, t>0.
\end{equation*}
With this fact, one can infer from \eqref{4.12} that
\begin{align}\label{4.15}
 \frac{d}{dt}\|u_m(t,\cdot)\|_{L^2(\mathbb{T}^N)}
 \leq \|\theta_m(t,\cdot)\|_{L^2(\mathbb{T}^N)},
\end{align}
by noting that $\|u_m(t,\cdot)\|_{L^2(\mathbb{T}^N)}^2=\sum_{j=1}^m \xi_{j,m}(t)^2$.
If we integrate \eqref{4.15} from 0 to $t$, we obtain
\begin{align*}
 \|u_m(t,\cdot)\|_{L^2(\mathbb{T}^N)} &\leq \|u_m(0,\cdot)\|_{L^2(\mathbb{T}^N)}+\int_0^t \|\theta_m(s,\cdot)\|_{L^2(\mathbb{T}^N)}ds,\notag\\
  &\leq \|u_0\|_{L^2(\mathbb{T}^N)}^2+t \|\theta_0\|_{L^2(\mathbb{T}^N)},\quad \forall\, m\in \mathbb{N},\ \forall\, t>0.
\end{align*}
The above priori estimates show that the solution $(\xi_{j,m(t)},\eta_{\alpha,m}(t))$ is bounded only by the initial data, then we can repeat the arguments above to extend the local solution to arbitrary time interval $[0,T]$. 
Thus for fixed integer $m$, the Cauchy problem of ordinary differential equations \eqref{4.7}-\eqref{4.9} possess a unique solution $(\xi_{j,m}(t), \eta_{\alpha,m}(t))\in C^1([0,T])$ for all $T>0$. Equivalently, we have the solution $(u_m,\theta_m)$ to the approximate equation \eqref{4.4}-\eqref{4.6} satisfies
$$
 u_m\in C^1\big([0,T]\,;H^r_\sigma(\mathbb{T}^N) \big),\ \theta_m\in C^1\big([0,T]\,;H^r(\mathbb{T}^N) \big),\ \forall\, T>0,\ \forall\, r>0,
$$
for fixed $m\in\mathbb{N}$.

Once we obtained the approximating solution $\big(u_m,\theta_m \big)$, we then perform the Gevrey norm estimates on $(u_m,\theta_m)$.  For fixed integer $m$, as a finite summation of ${\bf E}_j$ and ${\bf e}_\alpha$, the solution $\big(u_m,\theta_m\big)$ belongs to Gevrey class $\mathcal{D}(\Lambda^r e^{\tau\Lambda})$ for arbitrary $r>0$. 
Let $T>0$ be fiexed. For $0<t<T$, we take $L^2$-inner product of the velocity equation of \eqref{4.4} with $\Lambda^2 e^{2t \Lambda}u_m$, which gives
\begin{align}\label{4.16}
  &\big(\Lambda e^{t \Lambda}\frac{d}{dt} u_m(t),\Lambda e^{t \Lambda}u_m(t)\big)_{L^2(\mathbb{T}^N)}+\nu \|\Lambda^2 e^{t \Lambda} u_m(t)\|_{L^2(\mathbb{T}^N)}^2 \notag\\
  &=\big(\Lambda e^{t \Lambda}(\theta_m e_N),e^{t \Lambda} \Lambda u_m(t)\big)_{L^2(\mathbb{T}^N)}\\
  &\qquad\qquad-\big(\Lambda e^{t \Lambda}(u_m\cdot\nabla u_m),e^{t \Lambda} \Lambda u_m\big)_{L^2(\mathbb{T}^N)},\notag 
\end{align}
where we used the fact that $u_m$ is divergence free and $\mathcal{P}$ is symmetric.
Taking the $L^2$-inner product of the thermal equation of \eqref{4.5} with $\Lambda^2 e^{2t \Lambda}\theta_m$, we obtain, similarly,
\begin{align}\label{4.17}
 \big(\Lambda e^{t \Lambda}\frac{d}{dt} \theta_m(t) , \Lambda e^{t \Lambda}\theta_m \big)_{L^2(\mathbb{T}^N)}+\kappa \|\Lambda^2 e^{t \Lambda}\theta_m \|_{L^2(\mathbb{T}^N)}^2\notag \\
 +\big(\Lambda e^{t \Lambda}(u_m\cdot\nabla\theta_m) ,\Lambda e^{t \Lambda}\theta_m \big)_{L^2(\mathbb{T}^N)} =0.
\end{align}
By Plancherel's theorem and summing over \eqref{4.16} and \eqref{4.17}, we can write
\begin{align}\label{4.18}
 \frac{1}{2}\frac{d}{dt} &\big(\|\Lambda e^{t \Lambda} u_m(t) \|_{L^2(\mathbb{T}^N)}^2 +\|\Lambda e^{t \Lambda} \theta_m(t) \|_{L^2(\mathbb{T}^N)}^2\big)\notag\\
 &\quad+\nu \|\Lambda^2 e^{t \Lambda} u_m(t)\|_{L^2(\mathbb{T}^N)}^2 + \kappa \|\Lambda^2 e^{t \Lambda}\theta_m \|_{L^2(\mathbb{T}^N)}^2  \notag\\
 &=\big(\Lambda^{2}e^{t\Lambda}u_m,\Lambda e^{t\Lambda}u_m \big)_{L^2(\mathbb{T}^N)}+\big(\Lambda^{2}e^{t\Lambda}\theta_m ,\Lambda e^{t\Lambda}\theta_m \big)_{L^2(\mathbb{T}^N)} \notag\\
 &\quad +\big(\Lambda e^{t \Lambda}(\theta_m e_N),e^{t \Lambda} \Lambda u_m(t)\big)_{L^2(\mathbb{T}^N)}\\
 &\qquad\qquad-\big(\Lambda e^{t \Lambda}(u_m\cdot\nabla u_m),e^{t \Lambda} \Lambda u_m\big)_{L^2(\mathbb{T}^N)} \notag\\
 &\quad- \big(\Lambda e^{t \Lambda}(u_m\cdot\nabla\theta_m) ,\Lambda e^{t \Lambda}\theta_m \big)_{L^2(\mathbb{T}^N)}.\notag 
\end{align}
By Cauchy-Schwartz inequality, we have
\begin{align*}
 \big|\big(\Lambda^{2}e^{t\Lambda}u_m,\Lambda e^{t\Lambda}u_m \big)_{L^2(\mathbb{T}^N)}\big| \leq \frac{\nu}{8}\|\Lambda^{2}e^{t\Lambda}u_m\|_{L^2(\mathbb{T}^N)}^2 +C_\nu \|\Lambda e^{t\Lambda}u_m\|_{L^2(\mathbb{T}^N)}^2,
\end{align*}
and
\begin{align*}
 \big|\big(\Lambda^{2}e^{t\Lambda}\theta_m,\Lambda e^{t\Lambda}\theta_m \big)_{L^2(\mathbb{T}^N)}\big| \leq \frac{\kappa}{8}\|\Lambda^{2}e^{t\Lambda}\theta_m\|_{L^2(\mathbb{T}^N)}^2 +C_\kappa \|\Lambda e^{t\Lambda}\theta_m\|_{L^2(\mathbb{T}^N)}^2.
\end{align*}
From Lemma \ref{Lemma 4.1}, we have,
\begin{align}\label{4.21}
 \big|\big(\Lambda e^{t\Lambda}(u_m\cdot\nabla u_m) ,\Lambda e^{t\Lambda}u_m \big)_{L^2(\mathbb{T}^N)}\big|
 \leq C \|e^{t\Lambda}u_m\|_{1}^{3/2} \|e^{t\Lambda}u_m\|_{2}^{3/2},
\end{align}
and
\begin{align}
 \big|\big(\Lambda e^{t\Lambda}(u_m\cdot\nabla \theta_m) &,\Lambda e^{t\Lambda}\theta_m \big)_{L^2(\mathbb{T}^N)}\big| \notag\\
 &\leq C \|e^{t\Lambda}u_m\|_{1}^{1/2} \|e^{t\Lambda}u_m\|_{2}^{1/2}\|e^{t\Lambda} \theta_m\|_{1}\|e^{t\Lambda} \theta_m\|_{2}.\label{4.22}
\end{align}
Noting that on the torus $\mathbb{T}^N$ we have the following Poincar\'e inequality,
\begin{align*}
 \|e^{t\Lambda}u_m \|_{L^2(\mathbb{T}^N)} \leq C \|e^{t\Lambda}u_m \|_{1}, \quad \|e^{t\Lambda}\theta_m \|_{L^2(\mathbb{T}^N)} \leq C \|e^{t\Lambda}\theta_m \|_{1},
\end{align*}
for some constant $C$ independent of $m$. Using Young's inequality, the right hand side of \eqref{4.21} can be bounded by
\begin{align}\label{4.23}
 \big|\big(\Lambda e^{t\Lambda}(u_m\cdot\nabla u_m) ,\Lambda e^{t\Lambda}u_m \big)_{L^2(\mathbb{T}^N)}\big| \leq \frac{\nu}{8}\|e^{t\Lambda}u_m\|_2^2+C_{\nu}^\prime\|e^{t\Lambda}u_m \|_{1}^6.
\end{align}
By applying the Cauchy-Schwartz inequality twice, the right hand side of \eqref{4.22} can be bounded by
\begin{align}\label{4.24}
 \big|\big(\Lambda e^{t\Lambda} &(u_m\cdot\nabla \theta_m) ,\Lambda e^{t\Lambda}\theta_m \big)_{L^2(\mathbb{T}^N)}\big| \notag\\
 &\leq C_\kappa^\prime \|e^{t\Lambda}u_m\|_1 \|e^{t\Lambda}u_m\|_2 \|e^{t\Lambda}\theta_m\|_1^2 +\frac{\kappa}{8}\|e^{t\Lambda}\theta_m\|_2^2 \notag\\
 &\leq C_{\kappa,\nu}\|e^{t\Lambda}u_m\|_1^2 \|e^{t\Lambda}\theta_m\|_1^4+\frac{\nu}{8}\|e^{t\Lambda}u_m\|_2^2+\frac{\kappa}{8}\|e^{t\Lambda}\theta_m\|_2^2,
\end{align}
where \eqref{4.23} and \eqref{4.24} are both valid for the two dimensions $N=2$ and three dimensions $N=3$.
Then the right hand side of \eqref{4.18} is bounded by
\begin{align}\label{4.25}
 \frac{1}{2}\frac{d}{d t} &\big(\|\Lambda e^{t\Lambda}u_m(t,\cdot)\|_{L^2(\mathbb{T}^N)}^2+\|\Lambda e^{t\Lambda}\theta_m(t,\cdot)\|_{L^2(\mathbb{T}^N)}^2\big)+\frac{3\nu}{4}\|\Lambda^2 e^{t\Lambda}u_m\|_{L^2(\mathbb{T}^N)}^2 \notag\\
 &+\frac{3\kappa}{4}\|\Lambda^2 e^{t\Lambda}\theta_m\|_{L^2(\mathbb{T}^N)}^2\leq C_\nu\|\Lambda e^{t\Lambda}\|_{L^2(\mathbb{T}^N)}^2+C_\kappa \|\Lambda e^{t\Lambda}\theta_m\|_{L^2(\mathbb{T}^N)}^2 \notag\\
 &\quad+C_\nu^\prime \|\Lambda e^{t\Lambda} u_m\|_{L^2(\mathbb{T}^N)}^6 +C_{\kappa,\nu}\|\Lambda e^{t\Lambda}u_m\|_{L^2(\mathbb{T}^N)}^2 \|\Lambda e^{t\Lambda}\theta_m\|_{L^2(\mathbb{T}^N)}^4
\end{align}
Now if we set $X_m(t)=1+\|\Lambda e^{t\Lambda} u_m(t,\cdot)\|_{L^2(\mathbb{T}^N)}^2+\|\Lambda e^{t\Lambda} \theta_m(t,\cdot)\|_{L^2(\mathbb{T}^N)}^2$, then we have
\begin{align*}
 \frac{d}{dt}X_m(t)\leq C_{\nu,\kappa}^\prime X_m(t)^3,
\end{align*}
where $C_{\nu,\kappa}^\prime$ is a constant large enough depending on $\nu,\kappa$. We then obtain, for small $t$,
\begin{align*}
 X_m(t)\leq \frac{X_m(0)}{\sqrt{1-2C_{\nu,\kappa}^\prime X_m(0)^2 t}}\, .
\end{align*}
This gives
\begin{align*}
 X_m(t) &=1+\|\Lambda e^{t \Lambda}u_m(t,\cdot)\|_{L^2(\mathbb{T}^N)}^2+\|\Lambda e^{t \Lambda} \theta_m(t,\cdot)\|_{L^2(\mathbb{T}^N)}^2\\
       &\leq 2X_m(0)=2+2\|\Lambda u_m(0)\|_{L^2(\mathbb{T}^N)}^2+2\|\Lambda \theta_m(0)\|_{L^2(\mathbb{T}^N)}^2\\
       &\leq 2+2\|u_0\|_{L^2(\mathbb{T}^N)}^2+\|\theta_0\|_{L^2(\mathbb{T}^N)}^2\, ,
\end{align*}
for $0\le t\le T_1$ with
\begin{equation*}
T_1\big(\|\Lambda u_0\|_{L^2(\mathbb{T}^N)},\|\Lambda \theta_0\|_{L^2(\mathbb{T}^N)}\big)=\frac{3}{8C_{\nu,\kappa}^\prime}\big(1+\|\Lambda u_0\|_{L^2(\mathbb{T}^N)}^2+\|\Lambda \theta_0\|_{L^2(\mathbb{T}^N)}^2 \big)^{-2}.
\end{equation*}
Then $(u_m(t),\theta_m(t))\in \big(\mathcal{D}(\Lambda e^{t \Lambda}),\mathcal{D}(\Lambda e^{t \Lambda})\big)$ for $0<t<T_1$. In particular, for $0<t<T_1$, we have
\begin{align}\label{4.26}
 \|\Lambda e^{t \Lambda} u_m(t,\cdot)\|_{L^2(\mathbb{T}^N)}^2 &+\|\Lambda e^{t \Lambda} \theta_m(t,\cdot)\|_{L^2(\mathbb{T}^N)}^2\notag\\
 &\leq 2+2\|\Lambda u_0\|_{L^2(\mathbb{T}^N)}^2+2\|\Lambda \theta_0\|_{L^2(\mathbb{T}^N)}^2,\ \forall\, m,
\end{align}
That is
\begin{equation*}
 e^{t\Lambda}u_m,\,\, e^{t\Lambda}\theta_m\in L^\infty\big([0,T_1]\,; H^1\big),\ \forall\, m.
\end{equation*}
Moreover, integrating \eqref{4.25} from 0 to $T_1$ one obtain
\begin{equation}\label{4.27}
 e^{t\Lambda}u_m, \, e^{t\Lambda}\theta_m\in L^2\big([0,T_1]\,; H^2\big),\ \forall\, m.
\end{equation}
In order to use the compactness theorem of \cite{RT}, we also need to obtain the estimates for $\frac{d}{dt}\big(e^{t\Lambda}u_m(t) \big)$ and $\frac{d}{dt}\big(e^{t\Lambda}\theta_m(t)\big)$. To do this, we return to equation \eqref{4.4} and \eqref{4.5}. We apply $e^{t\Lambda}$ on both sides of \eqref{4.4},
\begin{align}\label{4.28}
 \frac{\partial}{\partial t}\big(e^{t\Lambda}u_m(t)\big)-\Lambda e^{t\Lambda}u_m
 &-\nu e^{t\Lambda}\Delta u_m +e^{t\Lambda}\mathbb{P}_m\mathcal{P}(u_m\cdot\nabla u_m)\notag\\
 &=e^{t\Lambda}\mathbb{P}_m\mathcal{P}(\theta_m e_n),
\end{align}
In the identity \eqref{4.28}, we note
\begin{equation*}
 \Lambda e^{t\Lambda} u_m \in L^\infty\big([0,T_1]\,;L^2(\mathbb{T}^N)\big),\ e^{t\Lambda}\mathbb{P}_m\mathcal{P}(\theta_m e_n)\in L^\infty\big([0,T_1]\,;L^2(\mathbb{T}^N)\big),\ \forall\, m.
\end{equation*}
By \eqref{4.27}, we have
\begin{equation*}
 e^{t\Lambda}\Delta u_m \in L^2\big([0,T_1]\,;L^2(\mathbb{T}^N)\big),\ \forall\, m.
\end{equation*}
Using Lemma \ref{Lemma 4.1}, we have
\begin{equation*}
 \|e^{t\Lambda}\mathcal{P}(u_m\cdot\nabla u_m)\|_{L^2(\mathbb{T}^N)} \leq C\|e^{t\Lambda}u_m\|_1^{3/2}\|e^{t\Lambda}u_m\|_2^{1/2}.
\end{equation*}
Then we have
\begin{equation*}
 e^{t\Lambda}\mathbb{P}_m\mathcal{P}(u_m\cdot\nabla u_m)\in L^4\big([0,T_1]\,; L^2(\mathbb{T}^N)\big).
\end{equation*}
Thus we infer from \eqref{4.28} and the above facts,
\begin{equation}\label{4.29}
 \frac{\partial}{\partial t}\big(e^{t\Lambda}u_m(t) \big)\in L^2\big([0,T_1]\,;L^2(\mathbb{T}^N) \big).
\end{equation}
By applying $e^{t\Lambda}$ on both sides of \eqref{4.5},  we have
\begin{equation*}
 \frac{\partial}{\partial t}\big(e^{t\Lambda}\theta_m(t) \big)-\Lambda e^{t\Lambda}\theta_m -\kappa e^{t\Lambda}\Delta \theta_m +e^{t\Lambda}\mathbf{P}_m(u_m\cdot\nabla \theta_m)=0.
\end{equation*}
From \eqref{4.26}, we have
\begin{equation*}
 \Lambda e^{t\Lambda}\theta_m \in L^\infty\big([0,T_1]\,;L^2(\mathbb{T}^N)\big)
\end{equation*}
By \eqref{4.27}, we have
\begin{equation*}
 \kappa e^{t\Lambda}\Delta \theta_m \in L^2\big([0,T_1]\,;L^2(\mathbb{T}^N)\big)
\end{equation*}
Using Lemma \ref{Lemma 4.1}, we have
\begin{equation*}
 \|e^{t\Lambda}\mathbf{P}_m(u_m\cdot\nabla\theta_m)\|_{L^2(\mathbb{T}^N)}\leq C\|e^{t\Lambda}u_m\|_1^{1/2}\|e^{t\Lambda}u_m\|_2^{1/2}\|e^{t\Lambda}\theta_m\|_1.
\end{equation*}
Therefore
\begin{equation*}
 e^{t\Lambda}\mathbf{P}_m(u_m\cdot\nabla\theta_m)\in L^4\big([0,T_1]\,;L^2(\mathbb{T}^N) \big)
\end{equation*}
Thus we have
\begin{equation}\label{4.31}
 \frac{\partial}{\partial t}\big(e^{t\Lambda}\theta_m(t) \big)\in L^2\big([0,T_1]\,;L^2(\mathbb{T}^N) \big)\, .
\end{equation}
With \eqref{4.26},\eqref{4.27},\eqref{4.29} and \eqref{4.31}, we obtain the limit solution $\big(E(u),F(\theta)\big)$ up to some subsequence of $(e^{t\Lambda} u_m, e^{t\Lambda} \theta_m)$ such that
\begin{equation*}
  e^{t\Lambda}u_m \to E(u),\  e^{t\Lambda}\theta_m \to F(\theta) \in L^2\big([0,T_1]\,;H^1(\mathbb{T}^N) \big),
\end{equation*}
where we used the compact embedding theorem \cite{RT}. To show that $E(u)=e^{t\Lambda}u$ and $F(\theta)=e^{t\Lambda}\theta$, one only need to recall that
\begin{equation*}
 u_m \to u,\ \theta_m\to \theta,\ \in L^2\big([0,T_1]\,;H^1(\mathbb{T}^N) \big).
\end{equation*}
Then we have, by the uniqueness of the strong solution,
\begin{equation*}
 e^{t\Lambda}u_m\to e^{t\Lambda}u,\ e^{t\Lambda}\theta_m\to e^{t\Lambda}\theta, \ \in L^2\big([0,T_1]\,;H^1(\mathbb{T}^N) \big).
\end{equation*}
For now we have proved the local solution with values in Gevrey class functions.
Now if we know that
\begin{equation}\label{4.33}
 \|\Lambda u(t,\cdot)\|_{L^2(\mathbb{T}^N)}^2+\|\Lambda \theta(t,\cdot)\|_{L^2(\mathbb{T}^N)}^2\leq M_{0,T},\quad \forall\, 0\leq t\leq T,
\end{equation}
for some large positive number depending on $T$ and the initial data,
then we can repeat the argument above at any time $0<t<T_1$ and find that the solution can be extended to arbitrary $T$ in two-dimensional space. 

But the estimate \eqref{4.33} is not true for three dimensions $N=3$, because we do not have the uniform bound \eqref{3.18} for all $t$ in three dimensions. 
\end{proof}

\bigskip
\noindent{\bf Acknowledgements.} 
The research of the second author is supported partially by
``The Fundamental Research Funds for Central Universities of China".

\end{document}